\newtheorem{Theorem}{Theorem}[section]
\newtheorem{Definition}[Theorem]{Definition}
\newtheorem{Lemma}[Theorem]{Lemma}
\newtheorem{Remark}[Theorem]{Remark}
\newtheorem{Hypothesis}{Hypothesis}
\numberwithin{equation}{section}
\begin{document}

\def\le{\left}
\def\r{\right}
\def\cost{\mbox{const}}
\def\a{\alpha}
\def\d{\delta}
\def\ph{\varphi}
\def\e{\epsilon}
\def\la{\lambda}
\def\si{\sigma}
\def\La{\Lambda}
\def\B{{\cal B}}
\def\A{{\mathcal A}}
\def\L{{\mathcal L}}
\def\O{{\mathcal O}}
\def\bO{\overline{{\mathcal O}}}
\def\F{{\mathcal F}}
\def\K{{\mathcal K}}
\def\H{{\mathcal H}}
\def\D{{\mathcal D}}
\def\C{{\mathcal C}}
\def\M{{\mathcal M}}
\def\N{{\mathcal N}}
\def\G{{\mathcal G}}
\def\T{{\mathcal T}}
\def\R{{\mathbb R}}
\def\I{{{\mathbb I}}}

\def\bw{\overline{W}}
\def\phin{\|\varphi\|_{0}}
\def\s0t{\sup_{t \in [0,T]}}
\def\lt{\lim_{t\rightarrow 0}}
\def\iot{\int_{0}^{t}}
\def\ioi{\int_0^{+\infty}}
\def\ds{\displaystyle}
\def\pag{\vfill\eject}
\def\fine{\par\vfill\supereject\end}
\def\acapo{\hfill\break}

\def\beq{\begin{equation}}
\def\eeq{\end{equation}}
\def\barr{\begin{array}}
\def\earr{\end{array}}
\def\vs{\vspace{.1mm}   \\}
\def\rd{\reals\,^{d}}
\def\rn{\reals\,^{n}}
\def\rr{\reals\,^{r}}
\def\bD{\overline{{\mathcal D}}}
\newcommand{\dimo}{\hfill \break {\bf Proof - }}
\newcommand{\nat}{\mathbb N}
\newcommand{\E}{\mathbb E}
\newcommand{\Pro}{\mathbb P}
\newcommand{\com}{{\scriptstyle \circ}}
\newcommand{\reals}{\mathbb R}

\newcommand{\red}[1]{\textcolor{red}{#1}}

\def\Amu{{A_\mu}}
\def\Qmu{{Q_\mu}}
\def\Smu{{S_\mu}}
\def\H{{\mathcal{H}}}
\def\Im{{\textnormal{Im }}}
\def\Tr{{\textnormal{Tr}}}
\def\E{{\mathbb{E}}}
\def\P{{\mathbb{P}}}
\def\span{{\textnormal{span}}}
\title{An averaging approach to the Smoluchowski-Kramers approximation in the presence of a varying magnetic field}
\author{Sandra Cerrai\thanks{Partially supported by the NSF grants DMS 1407615 and DMS 1712934.}\\
\normalsize University of Maryland, College Park, USA
\and
Jan Wehr\thanks{Partially supported by NSF grants DMS 1615045 and DMS 1911358}\\
\normalsize University of Arizona, Tucson, USA
\and
Yichun Zhu\\
\normalsize University of Maryland, College Park, USA}
\date{}

\date{}

\maketitle

\begin{abstract}
We study the small mass limit of the equation describing  planar motion  of a charged particle of a small mass $\mu$ in a force field, containing a magnetic component, perturbed by a stochastic term. We regularize the problem by adding a small friction of intensity $\e>0$. We show that for all small but fixed frictions the small mass limit of $q_{\mu, \e}$ gives the solution $q_\e$ to a stochastic first order equation, containing  a noise-induced drift term. Then, by using a generalization of the classical averaging theorem for Hamiltonian systems by Freidlin and Wentzell, we  take the limit of the slow component of the motion $q_\e$ and we prove that it converges weakly to a Markov process on the graph obtained by identifying all points in the same connected components of the level sets of the  magnetic field intensity function. 

\end{abstract}

\section{Introduction}
We are dealing with  planar motion  of a charged particle of a small mass $\mu$ in a force field, containing a magnetic component, perturbed by a stochastic term
\begin{equation}
\label{sjy1}
\le\{
\begin{array}{l}
\ds{\mu \,\ddot{q}_{\mu}(t)=b(q_\mu(t))-\la(q_{\mu}(t)) A\dot{q}_\mu(t)+\si(q_\mu(t))\,\dot{w}_t,}\\
\vs
\ds{q_\mu(0)=q \in\,\mathbb{R}^2,\ \ \ \ \ \dot{q}_\mu(0)=p \in\,\mathbb{R}^2.}
\end{array}\r.
\end{equation}
Here $b$ is a vector field in $\mathbb{R}^2$, $\si$ is $2\times 2$-matrix-valued mapping defined on $\mathbb{R}^2$ and $w(t)$ is a standard two-dimensional Brownian motion. 
Moreover, $\la:\mathbb{R}^2\to \mathbb{R}$ is a mapping, such that $ \la(x)\geq \la_0>0$, for every $x \in\,\mathbb{R}^2$, and
\[A=\le(\begin{array}{cc}
0 & 1\\
-1 & 0\end{array}\right).\]
We are  interested in understanding the limiting  behavior of the solution $q_\mu$ to the equation \eqref{sjy1}, as the mass $\mu$ converges to $0$. This is the so called Smoluchowski-Kramers limit.

\bigskip

It is well known (see \cite{f} for all details) that when the variable magnetic field considered in the present paper is replaced by a constant friction (that is $\la$ is constant and the matrix $A$ is the identity matrix),  then $q_\mu(t)$ converges to the solution of the first order equation 
\begin{equation}
\label{sjy2}
dq(t)=b(q(t))\,dt+\si(q(t))\,dw(t),\ \ \ \ q(0)=q.
\end{equation}
More precisely, for every fixed $T>0$ 
\begin{equation}
\label{sjy5}
\lim_{\mu\to 0} \mathbb{E}\max_{t \in\,[0,T]}|q_\mu(t)-q(t)|^2=0.\end{equation}
Here $q_\mu$ can be a vector of any finite dimension. The same result can be obtained also if $\la$ is still constant, but $A$ is a more general matrix, whose eigenvalues have strictly positive real parts, with the limiting equation \eqref{sjy2} replaced by
\begin{equation}
\label{sjy7}
dq(t)=A^{-1}b(q(t))\,dt+A^{-1}\si(q(t))\,dw(t),\ \ \ \ q(0)=q.\end{equation}

The  case of non constant friction has been widely studied recently (see \cite{bw} and \cite{hmdvw} for example).
To summarize the relevant results, consider the system
\begin{equation}
\label{sjy3}
\le\{
\begin{array}{l}
\ds{\mu \,\ddot{q}_{\mu}(t)=b(q_\mu(t))-\gamma(q_{\mu}(t))\dot{q}_\mu(t)+\si(q_\mu(t))\,\dot{w}_t,}\\
\vs
\ds{q_\mu(0)=q \in\,\mathbb{R}^k,\ \ \ \ \ \dot{q}_\mu(0)=p \in\,\mathbb{R}^k,}
\end{array}\r.
\end{equation}
for an $h$-dimensional Brownian motion $w(t)$. Assume that  the coefficients $b:\mathbb{R}^k\to \mathbb{R}^k$, $\gamma:\mathbb{R}^k\to   \mathbb{R}^{k\times k}$ and $\si:\mathbb{R}^k\to \mathbb{R}^{h\times k}$ are smooth and uniformly bounded and the smallest eigenvalue $\la_1(q)$ of the symmetric matrix $\gamma(q)+\gamma^\star(q)$ is  strictly positive, uniformly in $q \in\,\mathbb{R}^k$,
\[\inf_{q \in\,\mathbb{R}^k} \la_1(q)=:\bar{\la}>0.\]
In this case  the relation \eqref{sjy5}  still holds, but now $q(t)$ is the solution of the modified equation
\begin{equation}
\label{sjy6}
dq(t)=\le[\gamma^{-1}(q(t))b(q(t))+S(q(t))\r]\,dt+\gamma^{-1}(q(t))\si(q(t))dw(t),\ \ \ \ \ q(0)=q.
\end{equation}
Here $S(q)$ is the {\em noise-induced drift} whose $j$-th component equals
\[S_j(q)=\sum_{i,l=1}^k \frac{\partial}{\partial q_i}(\gamma^{-1})_{jl}(q) J_{li}(q),\ \ \ \ j=1,\ldots,k,\]
where $J$ is the matrix-valued function solving the Lyapunov equation
\[J(q)\gamma^\star(q)+\gamma(q) J(q)=\sigma(q) \sigma^\star(q),\ \ \ \ \ q \in\,\mathbb{R}^k.\]

In \cite{cf11}, the case of a particle subject to a constant  magnetic field orthogonal to the plane where the particle moves has been considered.  In this case, the motion of the particle is governed by equation \eqref{sjy1}, with $\la(q)\equiv \bar{\la}$, for every $q \in\,\mathbb{R}^2$ (for simplicity of notation in what follows we shall take $\bar{\la}=1$). In particular, since the eigenvalues of $A$ are purely imaginary, the methods and results described above are not applicable in this case.

It is not difficult to check  that if the stochastic term in \eqref{sjy1} is replaced by a continuous function, then $q_\mu$ converges uniformly in $[0,T]$ to the solution of \eqref{sjy7}. But for the white noise the result is different: while
\[\lim_{\mu\to 0}\int_0^t \sin \frac s\mu \,\varphi(s)\,ds=0,\]
for every continuous function $\varphi$, when $w(t)$ is a Brownian motion we have
\[\text{Var}\le(\int_0^t \sin \frac s\mu \,dw(s)\r)=\int_0^t\sin^2 \frac s\mu\,ds\to \frac t2,\ \ \ \ \text{as}\ \mu \downarrow 0,\]
so that
\[\lim_{\mu\to 0}\int_0^t \sin \frac s\mu \,dw(s)
\neq 0.\]

Because of this, in \cite{cf11} the problem has been regularized, in order to prove a suitable analog of the Smoluchowski-Kramers approximation.
The first regularization consisted in introducing in equation \eqref{sjy1} a small friction proportional to the velocity. Namely, the following equation has been considered
\[
\le\{\begin{array}{l}
\ds{\mu\, \ddot{q}_{\mu,\e}(t)=b(q_{\mu,\e}(t))-A_\e \dot{q}_{\mu,\e}(t)+\si(q_{\mu,\e}(t))\dot{w}(t),}\\
\vs
\ds{q_{\mu,\e}(0)=q \in\,\reals^{2},\ \ \ \ \dot{q}_{\mu,\e}(0)=p \in\,\reals^{2},}
\end{array}\r.
\]
where  
$
A_\e=A+\e\,I$
and $\e>0$ is a small parameter.
It has been  shown that for any $T>0$ 
\begin{equation}
\label{intro24}
\lim_{\mu\to 0}\,\E\max_{t \in\,[0,T]}|q_{\mu,\e}(t)-q_\e(t)|^2=0,
\end{equation}
where $q_\e(t)$ is the solution of the problem
\[
dq(t)=A_\e^{-1}b(q(t))\,dt+A_\e^{-1}\si(q(t))dw(t),\ \ \ q(0)=q.
\]
Next,  it has been shown that
\[\lim_{\e\to 0}\,\E \max_{t \in\,[0,T]}|q_\e(t)-q(t)|^2=0,
\]
where $q(t)$ is the solution of the problem
\begin{equation}
\label{intro3}
dq(t)=-A\, b(q(t))\,dt-A\,\si(q(t))\,dw(t),\ \ \ \ \ \ q(0)=q.\end{equation}

Another approach to regularization (see also \cite{lee} for the case of nonconstant magnetic field) used the fact that the white noise $\dot{w}(t)$ can be considered as an idealization of an isotropic $\d$-correlated smooth mean-zero Gaussian process $\dot{w}^\d(t)$, with
$0<\d<< 1$, which converges to $\dot{w}(t)$, as $\d\downarrow 0$. In this case, it has been proven that if $q_{\mu,\d}(t)$ is the solution of equation \eqref{sjy1},  with $\dot{w}(t)$ replaced by $\dot{w}^\d(t)$, then 
\[
\lim_{\mu\to 0}\,\E\max_{t \in\,[0,T]}|q_{\mu,\d}(t)-q_{\d}(t)|=0,\]
where $q_{\d}(t)$ solves the equation
\[
\dot{q}(t)=-A b(q(t))-A\si(q(t))\,\dot{w}^\d(t),\ \ \ q(0)=q.
\]
Next, by taking the limit as $\d\downarrow 0$, it has been proven that $q_\d(t)$ converges to the solution $\hat{q}(t)$ of the problem 
\[
d\hat{q}(t)=-A b(\hat{q}(t))\,dt-A\si(\hat{q}(t))\,\circ  dw(t),\ \ \ \hat{q}(0)=q,
\]
where  the stochastic term is now interpreted in Stratonovich sense.

\bigskip

In the present paper we are interested in the small mass limit in  the presence of a nonconstant magnetic field. To this purpose we add a small constant friction and we consider the regularized equation
\begin{equation}
\label{intro1000}
\le\{
\begin{array}{l}
\ds{\mu \,\ddot{q}_{\mu, \e}(t)=b(q_{\mu, \e}(t))-\le[\la(q_{\mu, \e}(t))A+\e I\r] \dot{q}_{\mu, \e}(t)+\si(q_{\mu, \e}(t))\,\dot{w}_t,}\\
\vs
\ds{q_{\mu, \e}(0)=q \in\,\mathbb{R}^2,\ \ \ \ \ \dot{q}_{\mu, \e}(0)=p \in\,\mathbb{R}^2.}
\end{array}\r.
\end{equation}
We show that under suitable conditions on the coefficients $b$, $\si$ and $\la$, the  above problem is well posed in $L^k(\Omega;C([0,T];\mathbb{R}^2))$, for every $T>0$ and $k\geq 1$.

For every fixed $\e>0$, equation \eqref{intro1000} is of the same type as those considered in \cite{hhv} and \cite{hmdvw}, so that we can take the small mass limit as $\mu$ goes to zero, obtaining
\[
\lim_{\mu\to 0}\mathbb{E}\sup_{t \in\,[0,T]}|q_{\mu, \e}(t)-q_\e(t)|=0,
\]
where $q_\e$ is the solution of the problem
\[\le\{\begin{array}{l}
\ds{dq_\e(t)=\le[\left(\la(q_\e(t)A+\e I\r)^{-1} b(q_\e(t))+S_\e(q_\e(t))\r]\,dt+\left(\la(q_\e(t)A+\e I\r)^{-1}\si(q_\e(t))\,dw(t),}\\
\vs
\ds{q_\e(0)=q.}
\end{array}\r.\]
After some computations, we obtain
\[\begin{array}{l}
\ds{dq_\e(q)=
\frac 1\e\gamma(q_\e(t)) {\nabla}^\perp\la(q_\e(t))\,dt+B(q_\e(t))\,dt+\Sigma(q_\e(t))\,dw(t),}\\
\vs
\ds{+\e\,\le[B_\e(q_\e(t))\,dt+\Sigma_\e(q_\e(t))\,dw(t)\r],\ \ \ \ \ \ q_\e(0)=q,}
\end{array}\]
where the mappings $\gamma:\mathbb{R}^2\to\mathbb{R}$, $B, B_\e:\mathbb{R}^2\to\mathbb{R}^2$ and $\Sigma, \Sigma_\e:\mathbb{R}^2\to \mathbb{R}^{2\times 2}$ can be explicitly computed.
This means that the motion of $q_\e$ is a superposition  of a fast component on the level sets of $\la$ and a slow transversal motion. Using a suitable generalization of the classical result of Freildin and Wentzell on averaging for Hamiltonian systems (see \cite[Chapter 8]{fw} and \cite{Y}), we shall prove that the projection of $q_\e$ over the graph $\Gamma$, obtained by identifying all points on the same connected component of each level set of $\la$, converges to a  Markov process $Y$, whose generator is explicitly given.

\section{Well-posedness of the regularized problem}

As  mentioned in the Introduction, we are dealing here with the  equation
\begin{equation}
\label{sjy10}
\le\{
\begin{array}{l}
\ds{\mu \,\ddot{q}_{\mu}(t)=b(q_\mu(t))-\la(q_{\mu}(t)) A\dot{q}_\mu(t)+\si(q_\mu(t))\,\dot{w}_t,}\\
\vs
\ds{q_\mu(0)=q \in\,\mathbb{R}^2,\ \ \ \ \ \dot{q}_\mu(0)=p \in\,\mathbb{R}^2,}
\end{array}\r.
\end{equation}
where $\mu$ is a small positive constant and $w(t)$ is a standard Brownian motion in $\mathbb{R}^2$.

In this section, we shall assume that the coefficients of the equation above satisfy the following assumptions. In Section \ref{sec3} we will impose a more restrictive growth condition on $\lambda$.

\begin{Hypothesis}
\label{H1}
\begin{enumerate}
\item The mappings $b:\mathbb{R}^2\to \mathbb{R}^2$ and $\si:\mathbb{R}^2\to \mathbb{R}^{2\times 2}$ are Lipschitz-continuous.
\item The mapping   $\la:\mathbb{R}^2\to\mathbb{R}$ is locally Lipschitz-continuous and there exist $\gamma\geq 0$ and $c>0$ such that
\begin{equation}
\label{sjy18}
|\la(q)|\leq c\le(1+|q|^\gamma\r),\ \ \ \ \ \la \in\,\mathbb{R}^2.
\end{equation}
Moreover
\[\inf_{q \in\,\mathbb{R}^2} \la(q)=:\la_0>0.\]
\end{enumerate}
\end{Hypothesis}

Next, for every $\e\geq 0$ we introduce the regularized problem
\begin{equation}
\label{sjy11}
\le\{
\begin{array}{l}
\ds{\mu \,\ddot{q}_{\mu, \e}(t)=b(q_{\mu, \e}(t))-\La_\e(q_{\mu, \e}(t)) \dot{q}_{\mu, \e}(t)+\si(q_{\mu, \e}(t))\,\dot{w}_t,}\\
\vs
\ds{q_{\mu, \e}(0)=q \in\,\mathbb{R}^2,\ \ \ \ \ \dot{q}_{\mu, \e}(0)=p \in\,\mathbb{R}^2,}
\end{array}\r.
\end{equation}
where
\[\La_\e(q)=\la(q)A+\e I=\le(\begin{array}{cc}
\e&\la(q)\\
-\la(q)  &  \e
\end{array}\r),\ \ \ \ q \in\,\mathbb{R}^2.\]
Notice that for every $\e>0$ the matrix $\La_\e(q)$ is uniformly non-degenerate, as
\begin{equation}
\label{sjy19}
\langle \La_\e(q)p,p\rangle=\e\,|p|^2.
\end{equation}
Moreover, when $\e=0$, equation \eqref{sjy11} coincides with equation \eqref{sjy10}.

\begin{Theorem}
Under Hypothesis \ref{H1}, for every $\mu>0$ and $\e\geq 0$ and for every $T>0$ and $k\geq 1$, equation \eqref{sjy11} admits a unique adapted solution $q_{\mu, \e} \in\,L^k(\Omega;C([0,T];\mathbb{R}^2))$.
\end{Theorem}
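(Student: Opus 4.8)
The plan is to recast the second-order equation \eqref{sjy11} as a first-order system in the phase space $\mathbb{R}^2\times\mathbb{R}^2$ and then to exploit the antisymmetry of the magnetic term to obtain an a priori energy bound that is insensitive to the polynomial growth of $\la$. Setting $v_{\mu,\e}:=\dot q_{\mu,\e}$, problem \eqref{sjy11} is equivalent to
\[
\le\{
\barr{l}
dq_{\mu,\e}(t)=v_{\mu,\e}(t)\,dt,\\[1mm]
dv_{\mu,\e}(t)=\ds\frac 1\mu\le[b(q_{\mu,\e}(t))-\La_\e(q_{\mu,\e}(t))v_{\mu,\e}(t)\r]\,dt+\frac 1\mu\si(q_{\mu,\e}(t))\,dw(t),
\earr
\r.
\]
with initial datum $(q,p)$. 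Since $b$ and $\si$ are globally Lipschitz and $\la$ is only locally Lipschitz, the drift $(v,\mu^{-1}[b(q)-\La_\e(q)v])$ is locally Lipschitz on $\mathbb{R}^4$ while the diffusion coefficient $\mu^{-1}\si(q)$ is globally Lipschitz; hence the classical theory of stochastic differential equations with locally Lipschitz coefficients yields a unique adapted solution up to a (possibly finite) explosion time $\tau$. Thus the entire content of the theorem reduces to proving that $\tau=+\infty$ almost surely, together with the required $L^k$ integrability.

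The crucial step is the energy estimate, where the superlinear nature of $\La_\e(q)v$ — the product of the polynomially growing $\la$ with the a priori unbounded velocity $v$ — is neutralized by the algebraic identity \eqref{sjy19}. Applying It\^o's formula to $|v_{\mu,\e}|^2$ and using that $\langle\La_\e(q)v,v\rangle=\e|v|^2$, one finds
\[
d|v_{\mu,\e}|^2=\le[\frac 2\mu\langle v_{\mu,\e},b(q_{\mu,\e})\rangle-\frac{2\e}\mu|v_{\mu,\e}|^2+\frac 1{\mu^2}\Tr(\si\si^\star(q_{\mu,\e}))\r]dt+\frac 2\mu\langle v_{\mu,\e},\si(q_{\mu,\e})\,dw\rangle.
\]
The polynomially growing coefficient $\la$ has \emph{completely} dropped out, because the antisymmetric matrix $\la(q)A$ does no work on the velocity. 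Since $b$ and $\si$ have linear growth, $-\tfrac{2\e}\mu|v|^2\leq 0$, and $\tfrac{d}{dt}|q_{\mu,\e}|^2=2\langle q_{\mu,\e},v_{\mu,\e}\rangle\leq|q_{\mu,\e}|^2+|v_{\mu,\e}|^2$, the quantity $E_{\mu,\e}:=1+|q_{\mu,\e}|^2+|v_{\mu,\e}|^2$ satisfies, for a constant $C=C(\mu,\e)$, a bound of the form $dE_{\mu,\e}\leq C\,E_{\mu,\e}\,dt+dM_t$, with $M$ a local martingale. I stress that this works uniformly for $\e\geq 0$: the dissipative term is used only through its favourable sign, and for $\e=0$ the magnetic term contributes exactly zero.

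Finally, to upgrade this into moment bounds and non-explosion I would introduce the stopping times $\tau_n:=\inf\{t:E_{\mu,\e}(t)\geq n\}\wedge T$, apply It\^o's formula to $E_{\mu,\e}^{\,p}$ on $[0,\tau_n]$ for arbitrary $p\geq 1$, estimate the resulting stochastic integral by the Burkholder-Davis-Gundy inequality and the remaining terms by Young's inequality, and then invoke Gronwall's lemma to get $\E\sup_{t\leq\tau_n}E_{\mu,\e}(t)^{p}\leq C_{p,T}$ uniformly in $n$. Letting $n\to\infty$ and using monotone convergence shows $\tau=+\infty$ a.s. and $\E\sup_{t\in[0,T]}(|q_{\mu,\e}(t)|+|v_{\mu,\e}(t)|)^{p}<\infty$ for every $p$, which in particular gives $q_{\mu,\e}\in L^k(\Omega;C([0,T];\mathbb{R}^2))$ for all $k\geq 1$. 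Pathwise uniqueness follows from local Lipschitz continuity on each ball combined with the non-explosion just established. The single genuine obstacle is precisely this superlinear friction term $\La_\e(q)v$, and the whole argument hinges on \eqref{sjy19}, which removes $\la$ from the energy balance and thereby reduces the problem to essentially linear-growth estimates.
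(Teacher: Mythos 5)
Your proposal is correct and rests on exactly the same key observation as the paper: the identity \eqref{sjy19} makes the superlinear term $\la(q)A\dot q$ drop out of the energy balance, after which stopping times and Gronwall give non-explosion and all moments. The paper implements the local-existence step by hand, truncating both $\la$ and the velocity (the maps $\la_n$ and $\beta_n$) to obtain a globally Lipschitz system and then patching the solutions together, whereas you invoke the standard locally-Lipschitz theory as a black box; these are equivalent. The one genuine difference is in how the supremum-in-time moment bound is obtained: the paper first proves pointwise moment bounds of every order via It\^o applied to $|q|^{2k}+|p|^{2k}$ and then reads off the supremum from the integral representation of $p^n_{\mu,\e}$, which is precisely where the polynomial growth condition \eqref{sjy18} on $\la$ (through \eqref{sjy15}) is used to control $\int_0^t \La_{\e,n}(q^n)\beta_n(p^n)\,ds$; your route via It\^o on $E^p$ plus Burkholder--Davis--Gundy never needs to estimate that integral, since the magnetic term contributes zero to $dE^p$ at every instant, so you obtain the same conclusion without using \eqref{sjy18} at all. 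Both arguments are valid; yours is marginally more economical in its hypotheses for this particular theorem.
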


\begin{proof}
For every $q, p \in\,\mathbb{R}^2$ and $n \in\,\mathbb{N}$, we define
\[\beta_n(p)=\ds{\begin{cases}
p,  &  \text{if}\  |p|\leq n,\\
\vs
\ds{np/|p|,}  &  \text{if}\ |p|\geq n,
\end{cases}}\]
and
\[\La_{\e, n}(q)=\la_n(q)A+\e I,\ \ \ \ \text{where}\ \ \ \ \la_{n}(q)=\ds{\begin{cases}
\la(q),  &  \text{if}\  |q|\leq n,\\
\vs
\ds{\la((n+1)q/|q|),}  &  \text{if}\ |q|\geq n+1,
\end{cases}}.\]
Notice that $\la_n:\mathbb{R}^2\to \mathbb{R}$ is Lipschitz-continuous and 
\begin{equation}
\label{sjy15}
|\la_n(q)|\leq c\le(1+|q|^\gamma\r),\ \ \ |\beta_n(p)|\leq |p|,
\end{equation}
for some constant $c$ independent of $n$.
Moreover, since $\langle A\beta_n(p),p\rangle=0,$ and $\langle \beta_n(p),p\rangle\leq |p|^2$, 
for every $p \in\,\mathbb{R}^2$ and $n \in\,\mathbb{N}$, we have  
\begin{equation}
\label{sjy13}
\langle \La_{\e, n}(q)\beta_n(p),p\rangle= \e\,|p|^2,\end{equation}
for every $p, q \in\,\mathbb{R}^2$, $n \in\,\mathbb{N}$ and $\e>0$.

With these notations, we introduce the problem
\[\le\{
\begin{array}{l}
\ds{\mu \,\ddot{q}^n_{\mu, \e}(t)=b(q^n_{\mu, \e}(t))-\La_{\e, n}(q^n_{\mu, \e}(t)) \beta_n(\dot{q}^n_{\mu, \e}(t))+\si(q^n_{\mu, \e}(t))\,\dot{w}_t,}\\
\vs
\ds{q^n_{\mu, \e}(0)=q \in\,\mathbb{R}^2,\ \ \ \ \ \dot{q}^n_{\mu, \e}(0)=p \in\,\mathbb{R}^2,}
\end{array}\r.
\]
which can be rewritten as
\begin{equation}
\label{sjy12}
\left\{\begin{array}{l}
\ds{dq_{\mu, \e}^n(t)=p_{\mu, \e}^n(t)\,dt,\ \ \ \ \ q^n_{\mu, \e}(0)=q}\\
\vs
\ds{\mu dp_{\mu, \e}^n(t)=\le[b(q_{\mu, \e}^n(t))-\La_{\e, n}(q_{\mu, \e}^n(t))\beta_n p_{\mu, \e}^n(t)\r]+\si(q_{\mu, \e}^n(t))\,dw(t),\ \ \ \ \ p_{\mu, \e}^n(t0)=p.}
\end{array}\r.\end{equation}
It is immediate to check that, for every fixed $n \in\,\mathbb{N}$ and $\e>0$, the mapping 
\[(q,p) \in\,\mathbb{R}^2\times \mathbb{R}^2\mapsto \La_{\e, n}(q)\beta_n(p) \in\, \mathbb{R}^2,\]
is Lipschitz-continuous, so that  equation \eqref{sjy12} admits a unique adapted solution $(q_{\mu, \e}^n, p_{\mu, \e}^n) \in\,L^p(\Omega;C^1([0,T];\mathbb{R}^2)\times C([0,T];\mathbb{R}^2))$. 

Now, applying It\^o's formula to the function $\Phi(q,p)=|q|^{2k}+|p|^{2k}$, for $k\geq 2$, we obtain
\[\begin{array}{l}
\ds{|q_{\mu, \e}^n(t)|^{2k}+|p_{\mu, \e}^n(t)|^{2k}=|q|^{2k}+|p|^{2k}+k\int_0^t |q_{\mu, \e}^n(s)|^{2k-2}\langle q_{\mu, \e}^n(s),p_{\mu, \e}^n(s)\rangle\,ds}\\
\vs
\ds{+\frac k\mu \int_0^t |p_{\mu, \e}^n(s)|^{2k-2}\langle p_{\mu, \e}^n(s),b(q_{\mu, \e}^n(s)-\La_{\e, n}(q_{\mu, \e}^n(s))\beta_n(p_{\mu, \e}^n(s))\rangle\,ds}\\
\vs
\ds{+\frac k{2\mu^2} \int_0^t |p_{\mu, \e}^n(s)|^{2k-2}\mbox{Tr}\le[\si \si^\star(q_{\mu, \e}^n(s))\r]\,ds+\frac {k(k-1)}{2\mu^2}  \int_0^t |p_{\mu, \e}^n(s)|^{2k-4}|\si(q_{\mu, \e}^n(s))p_{\mu, \e}^n(s)|^2\,ds}\\
\vs
\ds{+ \frac k\mu \int_0^t |p_{\mu, \e}^n(s)|^{2k-2}\langle p_{\mu, \e}^n(s),\si(q_{\mu, \e}^n(s))\,dw(s)\rangle.}
\end{array}\]
Therefore, thanks to \eqref{sjy13} and to the Young inequality, we have  for every $\e>0$
\[\begin{array}{l}
\ds{|q_{\mu, \e}^n(t)|^{2k}+|p_{\mu, \e}^n(t)|^{2k}\leq |q|^{2k}+|p|^{2k}+c_{k,\mu}\int_0^t \le[|q_{\mu, \e}^n(s)|^{2k}+|p_{\mu, \e}^n(s)|^{2k}\r]\,ds}\\
\vs
\ds{+\frac k\mu \int_0^t |p_{\mu, \e}^n(s)|^{2k-2}\langle p_{\mu, \e}^n(s),\si(q_{\mu, \e}^n(s))\,dw(s)\rangle.}
\end{array}\]
Taking  expected  in values of both sides and using the Gronwall lemma, we obtain
\begin{equation}
\label{sjy14}
\mathbb{E}|q_{\mu, \e}^n(t)|^{2k}+\mathbb{E}|p_{\mu, \e}^n(t)|^{2k}\leq c_{k,\mu}(T)\le(1+|q|^{2k}+|p|^{2k}\r),\ \ \ \ t \in\,[0,T].
\end{equation}
Therefore,
since 
\[q_{\mu, \e}^n(t)=q+\int_0^t p_{\mu, \e}^n(s)\,ds,\]
and
\[p_{\mu, \e}^n(t)=p+\frac 1\mu\int_0^t\le[b(q_{\mu, \e}^n(s))-\La_{\e, n}(q_{\mu, \e}^n(s))\beta_n(p_{\mu, \e}^n(s))\r]\,ds+\frac 1\mu\int_0^t \si(q_{\mu, \e}^n(s))\,dw(s),\]
using \eqref{sjy15}, from \eqref{sjy14} we obtain
\begin{equation}
\label{sjy17}
\sup_{n \in\,\mathbb{N}}\mathbb{E}\sup_{t \in\,[0,T]}\le(|q_{\mu, \e}^n(t)|^{2k}+|p_{\mu, \e}^n(t)|^{2k}
\r)\leq c_{k,\mu}(T,|q|, |p|).
\end{equation}

Now, for any $n \in\,\mathbb{N}$ we define
\[\tau_n=\inf \le\{t\geq 0\ :\ |q_{\mu, \e}^n(t)|\vee |p_{\mu, \e}^n(t)|\geq n\r\},\]
with the usual convention that $\inf \emptyset =+\infty$. Since 
\begin{equation}
\label{cwz1}
(q_{\mu, \e}^n(t),p_{\mu, \e}^n(t))=(q_{\mu, \e}^{m}(t),p_{\mu, \e}^{m}(t)),\ \ \ \ n<m,\ \ \ t\leq \tau_n,\end{equation}
it follows that 
the sequence $\{\tau_n\}_{n \in\,\mathbb{N}}$ is non-decreasing, $\mathbb{P}$-a.s., so that we can define 
\[\tau=\lim_{n\to \infty}\tau_n.\]
Due to \eqref{sjy17}, for every fixed $T>0$ we have
\[\begin{array}{l}
\ds{\mathbb{P}\le(\sup_{t \in\,[0,T]}|q_{\mu, \e}^n(t)\leq n,\ \sup_{t \in\,[0,T]}|p_{\mu, \e}^n(t)\leq n\r)}\\
\vs
\ds{\geq 1-\mathbb{P}\le(\sup_{t \in\,[0,T]}|q_{\mu, \e}^n(t)> n\r)-\mathbb{P}\le(\sup_{t \in\,[0,T]}|p_{\mu, \e}^n(t)> n\r)}\\
\vs
\ds{\geq 1-\frac{2 c_{1,\mu}(T,|q|, |p|)}{n}.}
\end{array}\]
This implies  
\[\lim_{n\to\infty}\mathbb{P}\le(\tau_n>T\r)=1,\]
and then, since $T$ is arbitrary, we conclude
\[\mathbb{P}\le (\tau=+\infty\r)=1.\]
In particular, if we set
\[(q_{\mu, \e}(t),p_{\mu, \e}(t))=(q_{\mu, \e}^n(t\wedge \tau_n),p_{\mu, \e}^n(t\wedge \tau_n)),\ \ \ \ t\leq  \tau,\]
due to \eqref{cwz1} we can conclude that there exists a unique solution $(q_{\mu, \e},p_{\mu,\e})$ to problem \eqref{sjy11}, belonging to $L^k(\Omega;C^1([0,T];\mathbb{R}^2)\times C([0,T];\mathbb{R}^2))$, for every $k \geq 1$ and $T>0$.
\end{proof}

\section{The Smoluchowski-Kramers approximation for the regularized problem}

For every $\e>0$ and $q \in\,\mathbb{R}^2$, the matrix $\La_\e(q)$ is invertible and  
\begin{equation}
\label{sjy33}
\La_\e^{-1}(q)=\frac 1{\la^2(q)+\e^2}\le(\begin{array}{cc}
\e&-\la(q)\\
\la(q)  &  \e
\end{array}\r).
\end{equation}
We introduce the vector field $S^\e(q)$, whose $j$-th component is defined by
\begin{equation}
\label{sjy22}
S^\e_j(q)=\sum_{i,l=1}^2 \partial_i(\La_{\e}^{-1})_{jl}(q) J^\e_{li}(q),\ \ \ \ j=1, 2,
\end{equation}
where  $\partial_i=\partial/\partial q_i$ and  $J^\e$ is the matrix-valued function solving the Lyapunov equation
\[J^\e(q)\La_\e^\star(q)+\La_\e(q) J^\e(q)=\sigma(q) \sigma^\star(q),\ \ \ \ \ q \in\,\mathbb{R}^2.\]
Thanks to \eqref{sjy19}, the equation above has a unique solution $J^\e$  which can be explicitly written as 
\begin{equation}
\label{cwz2}
\begin{array}{l}
\ds{J^\e(q)=\int_0^\infty e^{-\La_\e(q)r}\si\si^\star(q) e^{-\La_\e^\star(q)r}\,dr}\\
\vs
\ds{=\int_0^\infty e^{-\la(q)A r}\si\si^\star(q) e^{\la(q) Ar}e^{-2\e r}\,dr,\ \ \ \ \ q \in\,\mathbb{R}^2.}
\end{array}\end{equation}
It is immediate to check that
\[e^{-\la(q)A r}=\left(\begin{array}{cc}
\cos (\la(q)r)  &  -\sin (\la(q)r)\\
\vs
\sin (\la(q)r)  &  \cos (\la(q) r)
\end{array}\r),\ \ \ \ r\geq 0.\]
In what follows,  for every $q \in\,\mathbb{R}^2$ we denote
\[\left(\begin{array}{cc}
a_1(q)  &  a_0(q)\\
\vs
a_0(q)  &  a_2(q)
\end{array}\r)=:\si \si^\star(q),\]
and
\begin{equation}
\label{sjy50}
\beta_0(q):=\frac{a_1(q)+a_2(q)}4,\ \ \ \ \beta_1(q):=\frac{a_1(q)-a_2(q)}4\ \ \ \ \beta_2(q):=\frac{a_0(q)}2.
\end{equation}

\begin{Lemma}
\label{l2.2}
Assume that $\la:\mathbb{R}^2\to \mathbb{R}$ is differentiable. Then, there exist $M:\mathbb{R}^2\to \mathbb{R}^{2\times 2}$ and $R^\e:\mathbb{R}^2\to \mathbb{R}^{2\times 2}$ such that for every $\e>0$
\begin{equation}
\label{sjy30}
S^\e(q)=\frac 1\e\frac{\beta_0(q)}{\la^2(q)}{\nabla}^\perp\la(q)-M(q)\nabla \la(q)+R^\e(q)\nabla \la(q),\ \ \ \ \ q \in\,\mathbb{R}^2.
\end{equation}
\end{Lemma}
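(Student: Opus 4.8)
The plan is to obtain a closed form for $J^\e$ from the integral representation \eqref{cwz2}, reduce the definition \eqref{sjy22} of $S^\e$ to a single matrix product by exploiting that $\La_\e^{-1}$ depends on $q$ only through $\la(q)$, and then sort the resulting expression by its order in $\e$.

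First I would compute $J^\e$ explicitly. Since $e^{-\la Ar}$ is the rotation by angle $\la r$, the conjugation $e^{-\la Ar}\si\si^\star e^{\la Ar}$ rotates the symmetric matrix $\si\si^\star$, and writing it in the basis suggested by \eqref{sjy50},
\[
\si\si^\star=2\beta_0 I+2\beta_1\begin{pmatrix}1&0\\0&-1\end{pmatrix}+2\beta_2\begin{pmatrix}0&1\\1&0\end{pmatrix},
\]
the conjugated matrix becomes a combination of $I$, $\cos(2\la r)$- and $\sin(2\la r)$-terms. Inserting the damping factor $e^{-2\e r}$ and using $\int_0^\infty e^{-2\e r}\,dr=\frac1{2\e}$, $\int_0^\infty \cos(2\la r)e^{-2\e r}\,dr=\frac{\e}{2(\la^2+\e^2)}$ and $\int_0^\infty \sin(2\la r)e^{-2\e r}\,dr=\frac{\la}{2(\la^2+\e^2)}$, I obtain
\[
J^\e=\frac{\beta_0}{\e}\,I+\frac{1}{\la^2+\e^2}K,\qquad K=\begin{pmatrix}\beta_1\e-\beta_2\la & \beta_1\la+\beta_2\e\\ \beta_1\la+\beta_2\e & -(\beta_1\e-\beta_2\la)\end{pmatrix}.
\]
This isolates the only singular contribution, the $\frac{\beta_0}{\e}I$ piece; the remainder is bounded as $\e\to 0$.

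Next I would reduce $S^\e$ to a matrix product. Because $\La_\e^{-1}(q)$ depends on $q$ only through $\la(q)$, the chain rule gives $\partial_i(\La_\e^{-1})_{jl}=\frac{d}{d\la}(\La_\e^{-1})_{jl}\,\partial_i\la$, so \eqref{sjy22} collapses to $S^\e=D\,J^\e\,\nabla\la$, where $D:=\frac{d}{d\la}\La_\e^{-1}$. Differentiating \eqref{sjy33} one finds the compact form $D=\bigl[(\la^2-\e^2)A-2\e\la I\bigr]/(\la^2+\e^2)^2$. Then $\frac{\beta_0}{\e}D=\frac{\beta_0(\la^2-\e^2)}{\e(\la^2+\e^2)^2}A-\frac{2\beta_0\la}{(\la^2+\e^2)^2}I$, so the whole $1/\e$ singularity of $S^\e$ sits in the $A$-part, with leading coefficient $\frac{\beta_0}{\la^2}$ since $\frac{\la^2-\e^2}{(\la^2+\e^2)^2}\to\frac1{\la^2}$; using the paper's convention $A\nabla\la=\nabla^\perp\la$ this produces exactly the first term of \eqref{sjy30}. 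I would then define $M(q):=-\lim_{\e\to0}\bigl(D J^\e-\frac1\e\frac{\beta_0}{\la^2}A\bigr)$ and $R^\e:=\bigl(D J^\e-\frac1\e\frac{\beta_0}{\la^2}A\bigr)+M$. The bracketed quantity is a rational, hence continuous, function of $\e$ at $\e=0$ (as $\la\ge\la_0>0$), so $M$ is well defined and $\e$-independent while $R^\e\to0$; a short computation gives $M=\frac{1}{\la^3}\begin{pmatrix}2\beta_0-\beta_1 & -\beta_2\\ -\beta_2 & 2\beta_0+\beta_1\end{pmatrix}$.

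The main obstacle is the bookkeeping in the last step: one must check that no $1/\e$ term survives besides the stated one. The delicate point is that $D$ carries an $O(\e)$ multiple of $I$ (the $-2\e\la I$ term) which, multiplied by the $\beta_0/\e$ factor of $J^\e$, contributes a finite, $\e$-independent diagonal piece to $M$, whereas the correction to the $A$-coefficient, $\frac1\e\bigl[\frac{\beta_0(\la^2-\e^2)}{(\la^2+\e^2)^2}-\frac{\beta_0}{\la^2}\bigr]=O(\e)$, is absorbed into $R^\e$; simultaneously $\frac{1}{\la^2+\e^2}DK$ is regular and splits cleanly into its $\e=0$ value (feeding $M$) and a vanishing remainder (feeding $R^\e$). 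Tracking these orders correctly is what makes the identity \eqref{sjy30} come out with the precise constants claimed.
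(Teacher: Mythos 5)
Your proposal is correct and follows essentially the same route as the paper: both compute $J^\e$ explicitly from the integral representation \eqref{cwz2} (your closed form for $J^\e$ agrees with \eqref{sjy20}), differentiate $\La_\e^{-1}$ via the chain rule through $\la$ (your $D$ reproduces \eqref{sjy21}), substitute into \eqref{sjy22}, and sort by powers of $\e$, arriving at the same matrix $M$ as in \eqref{sjy31}. Your packaging of the substitution as the single product $S^\e=D\,J^\e\,\nabla\la$ and the definition of $M$ as the $\e\to 0$ limit of the regular part is a tidier bookkeeping of the componentwise computation the paper carries out, but it is the same argument.
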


\begin{proof}
Thanks to \eqref{cwz2},
we have \[\begin{array}{l}
\ds{J^\e_{11}(q)=\frac{\beta_0(q)}{\e}+\beta_1(q)\int_0^\infty \cos (\la(q) r)e^{-\e r}\,dr-\beta_2(q)\int_0^\infty \sin(\la(q) r)e^{-\e r}\,dr}\\
\vs
\ds{J^\e_{22}(q)=\frac{\beta_0(q)}{\e}-\beta_1(q)\int_0^\infty \cos (\la(q) r)e^{-\e r}\,dr+\beta_2(q)\int_0^\infty \sin(\la(q) r)e^{-\e r}\,dr}\\
\vs
\ds{J^\e_{12}(q)=J^\e_{21}(q)=\beta_1(q)\int_0^\infty \sin(\la(q)r)e^{-\e r}\,dr+\beta_2(q)\int_0^\infty \cos(\la(q)r)e^{-\e r}\,dr.}
\end{array}
\]
Integrating by parts, we obtain
\[\int_0^\infty \cos (\la(q) r)e^{-\e r}\,dr=\frac{\e}{\la^2(q)+\e^2},\]
and
\[\int_0^\infty \sin(\la(q) r)e^{-\e r}\,dr=\frac{\la(q)}{\la^2(q)+\e^2}.\]\
This implies
\begin{equation}
\label{sjy20}
\begin{array}{l}
\ds{J^\e_{11}(q)=\frac{\beta_0(q)}{\e}+\beta_1(q)\frac{\e}{\la^2(q)+\e^2}-\beta_2(q)\frac{\la(q)}{\la^2(q)+\e^2}}\\
\vs
\ds{J^\e_{22}(q)=\frac{\beta_0(q)}{\e}-\beta_1(q)\frac{\e}{\la^2(q)+\e^2}+\beta_2(q)\frac{\la(q)}{\la^2(q)+\e^2}}\\
\vs
\ds{J^\e_{12}(q)=J^\e_{21}(q)=\beta_1(q)\frac{\la(q)}{\la^2(q)+\e^2}+\beta_2(q)\frac{\e}{\la^2(q)+\e^2}.}
\end{array}
\end{equation}

Now, due to \eqref{sjy33},
for every $\e>0$ and $q \in\,\mathbb{R}^2$ we have
\begin{equation}
\label{sjy21}
\begin{array}{l}
\ds{\partial_i\le(\La_\e^{-1}\r)_{11}(q)=\partial_i\le(\La_\e^{-1}\r)_{22}(q)=-\frac{2\e\la(q)}{(\la^2(q)+\e^2)^2}\,\partial_i\la(q),\ \ \ \ i=1,2,}\\
\vs
\ds{
\partial_i \le(\La_\e^{-1}\r)_{12}(q)=-\partial_i\le(\La_\e^{-1}\r)_{21}(q)=\frac{\la^2(q)-\e^2}{(\la^2(q)+\e^2)^2}\,\partial_i\la(q),\ \ \ \ i=1,2.}
\end{array}
\end{equation}
Substituting \eqref{sjy20} and \eqref{sjy21} in \eqref{sjy22}, we obtain
\[\begin{array}{l}
\ds{S^\e_1(q)=-\frac{2\e\la(q)}{(\la^2(q)+\e^2)^2}\le[\le(\frac{\beta_0(q)}{\e}+\beta_1(q)\frac{\e}{\la^2(q)+\e^2}-\beta_2(q)\frac{\la(q)}{\la^2(q)+\e^2}\r)\partial_1\la(q)\r.}\\
\vs
\ds{+\le.\le(\beta_1(q)\frac{\la(q)}{\la^2(q)+\e^2}+\beta_2(q)\frac{\e}{\la^2(q)+\e^2} \r)\partial_2\la(q)\r]}\\
\vs
\ds{+\frac{\la^2(q)-\e^2}{(\la^2(q)+\e^2)^2}\le[\le(\beta_1(q)\frac{\la(q)}{\la^2(q)+\e^2}+\beta_2(q)\frac{\e}{\la^2(q)+\e^2}\r)\partial_1\la(q)\r.}\\
\vs
\ds{\le.+\le(\frac{\beta_0(q)}{\e}-\beta_1(q)\frac{\e}{\la^2(q)+\e^2}+\beta_2(q)\frac{\la(q)}{\la^2(q)+\e^2}\r)\partial_2\la(q)\r],}
\end{array}\]
and
\[\begin{array}{l}
\ds{S^\e_2(q)=-\frac{\la^2(q)-\e^2}{(\la^2(q)+\e^2)^2}\le[\le(\frac{\beta_0(q)}{\e}+\beta_1(q)\frac{\e}{\la^2(q)+\e^2}-\beta_2(q)\frac{\la(q)}{\la^2(q)+\e^2}\r)\partial_1\la(q)\r.}\\
\vs
\ds{+\le.\le(\beta_1(q)\frac{\la(q)}{\la^2(q)+\e^2}+\beta_2(q)\frac{\e}{\la^2(q)+\e^2} \r)\partial_2\la(q)\r]}\\
\vs
\ds{-\frac{2\e\la(q)}{(\la^2(q)+\e^2)^2}\le[\le(\beta_1(q)\frac{\la(q)}{\la^2(q)+\e^2}+\beta_2(q)\frac{\e}{\la^2(q)+\e^2} \r)\partial_1\la(q)\r.}\\
\vs
\ds{\le.+\le(\frac{\beta_0(q)}{\e}-\beta_1(q)\frac{\e}{\la^2(q)+\e^2}+\beta_2(q)\frac{\la(q)}{\la^2(q)+\e^2}\r)\partial_2\la(q)\r],}
\end{array}\]

Now, we define
\[\Gamma^\e_1(q):=\beta_1(q)\frac{\la(q)}{\la^2(q)+\e^2}+\beta_2(q)\frac{\e}{\la^2(q)+\e^2},\ \ \ \ \ \ \ \Gamma_1(q):=\frac{\beta_1(q)}{\la(q)},\]
and
\[\Gamma^\e_2(q):=\beta_1(q)\frac{\e}{\la^2(q)+\e^2}-\beta_2(q)\frac{\la(q)}{\la^2(q)+\e^2},\ \ \ \ \ \ \ \Gamma_2(q):=-\frac{\beta_2(q)}{\la(q)}.\]

With these notations
\[
\begin{array}{l}
\ds{S^\e_1(q)=\frac 1\e \frac{\beta_0(q)}{\la^2(q)}\,\partial_2\la(q)+\le[-\frac{2\beta_0(q)}{\la^3(q)}+\frac{\Gamma_1(q)}{\la^2(q)}\r]\,\partial_1\la(q)-\frac{\Gamma_2(q)}{\la^2(q)}\,\partial_2\la(q)}\\
\vs
\ds{+R_{11}^\e(q)\partial_1\la(q)+R_{12}^\e(q)\partial_2\la(q),}
\end{array}\]
where
\begin{equation}
\label{sjy24}
\begin{array}{l}
\ds{R^\e_{11}(q):=
-\frac{2\e\la(q)}{(\la^2(q)+\e^2)^2} \Gamma^\e_2(q)+\e\beta_2(q)\frac{\la^2(q)-\e^2}{(\la^2(q)+\e^2)^3}}\\
\vs
\ds{+
2\,\beta_0(q)\le[\frac 1{\la^3(q)}-\frac{\la(q)}{(\la^2(q)+\e^2)^2}\r]-\beta_1(q)\le[\frac 1{\la^3(q)}-\frac{\la(q)\le(\la^2(q)-\e^2\r)}{(\la^2(q)+\e^2)^3}\r],}
\end{array}\end{equation}
and
\begin{equation}
\label{sjy25}
\begin{array}{l}
\ds{R^\e_{12}(q):=-\frac{2\la(q)\e}{(\la^2(q)+\e^2)^2}\Gamma_1^\e(q)
-\e\beta_2(q)\frac{\la^2(q)-\e^2}{(\la^2(q)+\e^2)^3}}\\
\vs
\ds{-\frac{\beta_0(q)}\e\le[\frac 1{\la^2(q)}-\frac{\la^2(q)-\e^2}{(\la^2(q)+\e^2)^2}\r]+\beta_1(q)\le[\frac 1{\la^3(q)}-\frac{\la(q)\le(\la^2(q)-\e^2\r)}{(\la^2(q)+\e^2)^3}\r].}
\end{array}
\end{equation}
Similarly, we have 
\[
\begin{array}{l}
\ds{S^\e_2(q)=-\frac 1\e \frac{\beta_0(q)}{\la^2(q)}\partial_1\la(q)-\frac{\Gamma_2(q)}{\la^2(q)}\partial_1\la(q)-\le[\frac{2\beta_0(q)}{\la^3(q)}+\frac{\Gamma_1(q)}{\la^2(q)}\r]\partial_2 \la(q)}\\
\vs
\ds{+R^\e_{21}(q)\partial_1\la(q)+R^\e_{22}(q)\partial_2\la(q),}
\end{array}
\]
where
\begin{equation}
\label{sjy27}
\begin{array}{l}
\ds{R^\e_{21}(q):=-\frac{2\la(q)\e}{(\la^2(q)+\e^2)^2}\Gamma_1^\e(q)-\e\beta_1(q)\frac{\la^2(q)-\e^2}{(\la^2(q)+\e^2)^3}}\\
\vs
\ds{+\frac{\beta_0(q)}\e\le[\frac 1{\la^2(q)}-\frac{\la^2(q)-\e^2}{(\la^2(q)+\e^2)^2}\r]-\beta_2(q)\le[\frac 1{\la^3(q)}-\frac{\la(q)\le(\la^2(q)-\e^2\r)}{(\la^2(q)+\e^2)^3}\r],}
\end{array}
\end{equation}
and 
\begin{equation}
\label{sjy28}
\begin{array}{l}
\ds{R^\e_{22}(q):=\frac{2\e\la(q)}{(\la^2(q)+\e^2)^2}\Gamma^\e_2
-\e\beta_2(q)\frac{\la^2(q)-\e^2}{(\la^2(q)+\e^2)^3}}\\
\vs
\ds{+2\beta_0(q)\le[\frac 1{\la^3(q)}-\frac{\la(q)}{(\la^2(q)+\e^2)^2}\r]+\beta_1(q)\le[\frac 1{\la^3(q)}-\frac{\la(q)\le(\la^2(q)-\e^2\r)}{(\la^2(q)+\e^2)^3}\r].}
\end{array}
\end{equation}
Therefore, recalling that $\Gamma_1(q)=\beta_1(q)/\la(q)$ and $\Gamma_2(q)=-\beta_2(q)/\la(q)$, if we define
\begin{equation}
\label{sjy31}
M(q)=\frac 1{\la^3(q)}\le(\begin{array}{cc}
\ds{2\beta_0(q)-\beta_1(q) } & \ds{-\beta_2(q)}\\
& \vs
\ds{-\beta_2(q)}  &  \ds{2\beta_0(q)+\beta_1(q)}
\end{array}\r),\end{equation}
and $R^\e(q)=(R^\e_{ij}(q))_{i,j=1,2}$, where the components $R^\e_{ij}(q)$ are defined in \eqref{sjy24}, \eqref{sjy25}, \eqref{sjy27} and \eqref{sjy28}, we obtain \eqref{sjy30}.
\end{proof}

In what follows we shall assume that the following condition is satisfied.

\begin{Hypothesis}
\label{H2}
\begin{enumerate}
\item The mapping $\la:\mathbb{R}^2\to \mathbb{R}$ is continuously differentiable.
\item For every $\e>0$, the mapping   $S_\e:\mathbb{R}^2\to \mathbb{R}^2$ introduced in \eqref{sjy22}  is locally Lipschitz-continuous and has linear growth.
\item For every $\e>0$ the mappings $\La_\e^{-1}b:\mathbb{R}^2\to \mathbb{R}^2$ and $\La_\e^{-1}\si :\mathbb{R}^2\to\mathbb{R}^{2\times 2}$ are locally Lipschitz-continuous and have linear growth.
\end{enumerate}
\end{Hypothesis}

\begin{Remark}
\label{rem2.3}
{\em 
\begin{enumerate}
\item AUsing the explicit  expression of $M(q)$ given in \eqref{sjy31} and the expressions for the coefficients of $R_\e(q)$ given in \eqref{sjy24}, \eqref{sjy25}, \eqref{sjy27} and \eqref{sjy28}, thanks to what we have already assumed in Hypothesis \ref{H1}, we can check easily that Hypothesis \ref{H2} is satisfied if we assume $\si$ to  be bounded and $\la$ to be bounded and differentiable, with $\nabla \la:\mathbb{R}^2\to \mathbb{R}^2$ Lipschitz-continuous.
\item  On the other hand, if we assume that $\nabla \la:\mathbb{R}^2\to \mathbb{R}^2$ is locally Lipschitz-continuous and has linear growth and there exists $c>0$ such that for $|q|$ large enough
\[|\la(q)|\geq c\,|q|^2,\]
then Hypothesis \ref{H2} is satisfied, without assuming $\si$ to be bounded.
\end{enumerate}
}
\end{Remark}

\begin{Theorem}
For every $\mu, \e>0$, let $q_{\mu, \e}$  be the solution of problem \eqref{sjy12}.
 Then, under Hypotheses \ref{H1} and \ref{H2}, for every $\e>0$ we have
\begin{equation}
\label{sjy35}
\lim_{\mu\to 0}\mathbb{E}\sup_{t \in\,[0,T]}|q_{\mu, \e}(t)-q_\e(t)|=0,
\end{equation}
where $q_\e$ is the solution of the problem
\begin{equation}
\label{sjy34}
dq_\e(t)=\le[\La_\e^{-1} b(q_\e(t))+S_\e(q_\e(t))\r]\,dt+\La_\e^{-1}\si(q_\e(t))\,dw(t),\ \ \ \ \ q_\e(0)=q.
\end{equation}

\end{Theorem}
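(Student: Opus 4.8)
The plan is to treat \eqref{sjy11}, for each \emph{fixed} $\e>0$, as a Smoluchowski--Kramers system with a state-dependent but uniformly dissipative friction matrix, and to show that the small-mass limit is governed by \eqref{sjy34} with the noise-induced drift $S_\e$ already identified in Lemma \ref{l2.2}. The key structural fact is \eqref{sjy19}: the symmetric part of $\La_\e(q)$ is $\e I$, so $\La_\e(q)$ has spectrum with real part $\e>0$ uniformly in $q$. Together with Hypotheses \ref{H1} and \ref{H2}, this places \eqref{sjy11} squarely within the framework of \cite{hhv} and \cite{hmdvw}, and the quickest route is simply to verify their hypotheses and invoke that theorem, noting that their abstract noise-induced drift coincides with \eqref{sjy22} and hence, by Lemma \ref{l2.2}, with $S_\e$. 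The limit equation \eqref{sjy34} is itself well posed because, under Hypothesis \ref{H2}, the coefficients $\La_\e^{-1}b$, $S_\e$ and $\La_\e^{-1}\si$ are locally Lipschitz with linear growth. For a self-contained argument, I would first record a priori bounds, uniform in $\mu\in(0,1)$: writing $p_{\mu,\e}=\dot q_{\mu,\e}$, applying It\^o's formula to $\mu|p_{\mu,\e}|^2$, using $\langle\La_\e(q)p,p\rangle=\e|p|^2$ and Young's inequality, and closing by a Gronwall comparison that retains the dissipation $-\e\,\E|p_{\mu,\e}|^2$, one obtains
\[
\sup_{t\in[0,T]}\mu\,\E|p_{\mu,\e}(t)|^{2}\leq C,\qquad \mu\,\E\int_0^T|p_{\mu,\e}(s)|^{2}\,ds\leq C,
\]
with $C$ independent of $\mu$ (and analogues for higher moments), together with $\E\sup_{[0,T]}|q_{\mu,\e}|^{k}\leq C_k$; these encode the thermal scaling $\E|p_{\mu,\e}|^2\sim\mu^{-1}$.

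Next I would rewrite the momentum equation as $p_{\mu,\e}\,dt=\La_\e^{-1}(q_{\mu,\e})b(q_{\mu,\e})\,dt+\La_\e^{-1}(q_{\mu,\e})\si(q_{\mu,\e})\,dw-\mu\,\La_\e^{-1}(q_{\mu,\e})\,dp_{\mu,\e}$ and integrate, using $q_{\mu,\e}(t)-q=\int_0^t p_{\mu,\e}\,ds$. Since $\La_\e^{-1}(q_{\mu,\e})$ is of finite variation, integrating the last term by parts gives
\[
-\mu\int_0^t\La_\e^{-1}(q_{\mu,\e})\,dp_{\mu,\e}
= -\mu\,\La_\e^{-1}(q_{\mu,\e}(t))\,p_{\mu,\e}(t)+\mu\,\La_\e^{-1}(q)\,p
+\mu\int_0^t\sum_{i=1}^{2}\partial_i\La_\e^{-1}(q_{\mu,\e})\,p^i_{\mu,\e}\,p_{\mu,\e}\,ds .
\]
By the bound $\mu\,\E|p_{\mu,\e}(t)|^2\leq C$ and the controlled growth of $\La_\e^{-1}$, the two boundary terms vanish in $L^1(\Omega)$, so the whole limit reduces to identifying the last integral.

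The crux is to show that this integral converges to $\int_0^t S_\e(q_\e)\,ds$, i.e.\ that inside the time integral $\mu\,p_{\mu,\e}\otimes p_{\mu,\e}$ may be replaced by $J^\e(q_{\mu,\e})$. Applying It\^o's formula to $\mu\,p_{\mu,\e}\otimes p_{\mu,\e}$ and using $dp_{\mu,\e}=\tfrac1\mu(b-\La_\e p_{\mu,\e})\,dt+\tfrac1\mu\si\,dw$ yields, with arguments at $q_{\mu,\e}$ suppressed,
\[
d(\mu\,p_{\mu,\e}\otimes p_{\mu,\e})
=\Bigl[b\otimes p_{\mu,\e}+p_{\mu,\e}\otimes b-\La_\e(p_{\mu,\e}\otimes p_{\mu,\e})-(p_{\mu,\e}\otimes p_{\mu,\e})\La_\e^\star+\tfrac1\mu\si\si^\star\Bigr]dt+dN,
\]
where $N$ is a martingale. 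Multiplying the two friction terms by $\mu$ reconstructs $\La_\e(\mu\,p\otimes p)+(\mu\,p\otimes p)\La_\e^\star$; comparing with the Lyapunov equation $\La_\e J^\e+J^\e\La_\e^\star=\si\si^\star$ from \eqref{cwz2} then shows that $\La_\e(\mu\,p\otimes p-J^\e)+(\mu\,p\otimes p-J^\e)\La_\e^\star$ equals $\mu[b\otimes p+p\otimes b]$ plus a boundary differential $-\mu\,d(\mu\,p\otimes p)$ plus $\mu\,dN$. Since the linear map $X\mapsto\La_\e X+X\La_\e^\star$ is boundedly invertible (precisely because $\La_\e$ has spectrum with positive real part), integrating against the bounded coefficient $\partial_i\La_\e^{-1}(q_{\mu,\e})$ and using the a priori bounds makes each of these remainders $O(\sqrt\mu)$ in $L^1(\Omega;C([0,T]))$: the drift term by $\mu\,\E\int|p_{\mu,\e}|\leq C\sqrt\mu$, the boundary term by $\mu\,\E|\mu\,p_{\mu,\e}\otimes p_{\mu,\e}|=O(\mu)$, and the martingale by a Burkholder--Davis--Gundy estimate with quadratic variation of order $\mu^{-1}$. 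Recalling the definition \eqref{sjy22}, this gives $\mu\int_0^t\sum_i\partial_i\La_\e^{-1}(q_{\mu,\e})\,p^i_{\mu,\e}p_{\mu,\e}\,ds-\int_0^t S_\e(q_{\mu,\e})\,ds\to 0$.

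Collecting the pieces, $q_{\mu,\e}$ satisfies \eqref{sjy34} up to an error vanishing in $L^1(\Omega;C([0,T];\reals\,^{2}))$. I would close by subtracting the integral forms of $q_{\mu,\e}$ and $q_\e$, applying Burkholder--Davis--Gundy to the stochastic integrals and Gronwall's lemma, and using the local Lipschitz continuity from Hypothesis \ref{H2} together with a localization by stopping times to pass from local to global estimates, thereby obtaining \eqref{sjy35}. The main obstacle is the averaging step of the third paragraph: establishing that $\mu\,p_{\mu,\e}\otimes p_{\mu,\e}$ concentrates on the Lyapunov solution $J^\e(q_{\mu,\e})$ in the time-integrated sense, with all correction terms negligible uniformly in $\mu$; the remaining estimates are routine a priori bounds and a Gronwall closure.
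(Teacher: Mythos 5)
Your proposal is correct and its ``quickest route'' is exactly the paper's proof: the paper simply notes that Hypotheses \ref{H1} and \ref{H2} give well-posedness of \eqref{sjy34}, observes the uniform dissipativity $\langle \La_\e(q)p,p\rangle=\e|p|^2$, and invokes \cite[Theorem 2.4]{hhv}. The additional self-contained sketch (a priori bounds with the thermal scaling, integration by parts on $\mu\La_\e^{-1}\,dp_{\mu,\e}$, and the Lyapunov-equation argument identifying the limit of $\mu\,p_{\mu,\e}\otimes p_{\mu,\e}$ with $J^\e$ and hence the drift with $S^\e$) is a faithful reproduction of the argument underlying the cited references rather than a different route, and the paper does not carry it out.
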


\begin{proof}
Assuming Hypotheses \ref{H1} and \ref{H2}, we have that for every $\e>0$ and for every $k\geq 1$ and $T>0$  problem \eqref{sjy34}
admits a unique solution $q_\e \in\,L^k(\Omega;C([0,T];\mathbb{R}^2))$.
As $\langle \La_\e(q)p,p\rangle=\e\,|p|^2$, this allows to conclude the proof thanks to  \cite[Theorem 2.4]{hhv}.
\end{proof}

\section{The averaging limit}
In this section we want to investigate the limiting behavior of the slow component of $q_\e$, as $\e$ goes to zero. To this purpose, we need to introduce some preliminary material.

\label{sec3}

\subsection{Some notations and further assumptions}

We consider here the  system 
\begin{equation}
\label{ham}
\dot{X}(t)=\frac{\beta_0(X(t))}{\la^2(X(t))}\,\nabla^\perp  \la(X(t)).
\end{equation}
Clearly, for every $t\geq 0$, we have $\la(X(t))=\la(X(0))$. Now, if we consider the perturbed system
\[\begin{array}{l}
\ds{dX_\e(q)=
\frac{\beta_0(X_\e(t))}{\la^2(X_\e(t))}{\nabla}^\perp\la(X_\e(t))\,dt}\\
\vs
\ds{+ \e\le[\frac{1}{\la(X_\e(t))}Ab(X_\e(t))-M(X_\e(t))\nabla \la(X_\e(t))\r]\,dt+\frac {\sqrt{\e}}{\la(X_\e(t))}A\si(X_\e(t))\,dw(t)}\\
\vs
\ds{+\e^2\le[H^\e(X_\e(t))b(X_\e(t))+\hat{R}^\e(X_\e(t))\nabla \la(X_\e(t))\r]\,dt+\e H^\e(X_\e(t))\si(X_\e(t))\,dw(t),}
\end{array}\]
the quantity $\la(X_\e(t))$ is no longer conserved. However, for any fixed time interval $[0,T]$ and for every $k\geq 1$,  we have
\[\lim_{\e\to 0}\mathbb{E}\sup_{t \in\,[0,T]}|X_\e(t)-X(t)|^k=0,\]
and, as an immediate consequence,
\[\lim_{\e\to 0}\mathbb{E}\sup_{t \in\,[0,T]}|\la(X_\e(t))-\la(X(0))|^k=0.\]

Now, with the change of time $t\mapsto t/\e$, we can check that
\[\mathcal{L}(X_\e(\cdot/\e))=\mathcal{L}(q_\e(\cdot)),\]
where $q_\e$ is the solution of equation \eqref{sjy34}.
As  mentioned above, our aim is to identify the non trivial limit for the distribution of the process $\la(q_\e(\cdot))$, as $\e\downarrow 0$. To this purpose, in addition to Hypotheses \ref{H1} and \ref{H2}, we assume that $\la$ satisfies the following conditions.

\begin{Hypothesis}
\label{H3}
\begin{enumerate}
\item If $\beta_0$ is the function defined in \eqref{sjy50}, we have
\begin{equation}
\label{sjy51}
\inf_{x \in\,\mathbb{R}^2}\beta_0(x)>0.
\end{equation}
\item The mapping $\la:\mathbb{R}^2\to\mathbb{R}$ is four times continuously differentiable, with bounded second derivative.
\item The mapping $\la$ has only a finite number of critical points $x_1,\ldots, x_n$. The matrix of second derivative $D^2\la(x_i)$ is non degenerate, for every $i=1,\ldots,n$ and $\la(x_i)\neq \la(x_j)$,  if $i\neq j$.
\item There exists a positive constant $a$ such that
$\la(x)\geq a\,|x|^2$, $|\nabla \la(x)|\geq a\,|x|$ and $\Delta \la(x)\geq a$, for all $x \in\,\reals^2$, with $|x|$ large enough.

\end{enumerate}
\end{Hypothesis}

\begin{Remark}
{\em 
 Remember that  the function $\beta_0$ was defined as $[(\si\si^\star)_{11}^2+(\si\si^\star)_{22}^2]/4$. Therefore, condition \eqref{sjy51} is a non-degeneracy condition on the noisy perturbation. 
}

\end{Remark}

Next, for every $z\geq \la_0$, we denote by $C(z)$ the $z$-level set 
\[C(z)=\le\{x \in\,\mathbb{R}^2\,:\,\la(x)=z\r\}.\]
The set  $C(z)$ may consist of several connected components
\[C(z)=\bigcup_{k=1}^{N(z)} C_k(z),\]
and for every $x \in\,\mathbb{R}^2$  we have
\[X(0)=x\Longrightarrow X(t) \in\,C_{k(x)}(\la(x)),\ \ \ \ t\geq 0,\]
where $C_{k(x)}(x)$ is the  connected component of the level set $C(\la(x))$, to which  the point $x$ belongs.
For every $z \geq 0$ and $k=1,\ldots,N(z)$, we shall denote by $G_k(z)$ the domain of $\mathbb{R}^2$ bounded by the level set component $C_k(z)$.

\medskip

If we identify all points in $\mathbb{R}^2$  belonging to the same connected component of a given level set $C(z)$ of the Hamiltonian $\la$, we obtain a graph $\Gamma$,  given by several edges $I_1,\ldots I_n$ and vertices $O_1,\ldots, O_m$. The vertices will be of two different types,  external and internal vertices. External vertices correspond to local extrema of  $\la$, while internal vertices correspond to saddle points of $\la$. Among external vertices, we will also include $O_\infty$, the vertex of the graph corresponding to the point at infinity.

In what follows, we shall denote by $\Pi:\mathbb{R}^2\to \Gamma$ the {\em identification map}, that associates to every point $x \in\,\mathbb{R}^2$ the corresponding point $\Pi(x)$ of the graph $\Gamma$. We have $\Pi(x)=(\la(x),k(x))$, where $k(x)$ denotes the number of the edge on the graph $\Gamma$, containing the point $\Pi(x)$. If $O_i$ is one of the interior vertices, the second coordinate cannot  be chosen in a unique way, as there are three edges having $O_i$ as their endpoint. Notice that both $k(x)$ and $H(x)$ are  first integrals (a discrete and a continuous one, respectively) of  system \eqref{ham}.

On the graph $\Gamma$, a distance can be introduced in the following way. If $y_1=(z_1,k)$ and $y_2=(z_2,k)$ belong to the same edge $I_k$, then $d(y_1,y_2)=|z_1-z_2|$. In the case $y_1$ and $y_2$ belong to different edges, then
\[d(y_1,y_2)=\min\,\le\{d(y_1, O_{i_1})+d(O_{i_1},O_{i_2})+\cdots+d(O_{i_j},y_2)\r\},\]
where the minimum is taken over all possible paths from $y_1$ to $y_2$, through every possible sequence of vertices $O_{i_1},\ldots,O_{i_{j}}$, connecting $y_1$ to $y_2$.

\medskip
If $z$ is not a critical value, then each $C_k(z)$ consists of one periodic trajectory of the vector field $\nabla^\perp \la(x)$. If $z$ is a local extremum of $\la(x)$, then, among the components of $C(z)$ there is a set consisting of one point, the equilibrium point of the flow. If $\la(x)$ has a saddle point at some point $x_0$ and $\la(x_0)=z$, then $C(z)$ consists of three trajectories: the equilibrium point $x_0$ and the two trajectories that have $x_0$ as their limiting point, as $t\to \pm \infty$.

\medskip

Now, for every $(z,k) \in\,\Gamma$, we define
\begin{equation}
\label{fluid2}
T_k(z)=\oint_{C_k(z)}\frac {\la^2(x)}{\beta_0(x)|\nabla \la(x)|}\,dl_{z,k},
\end{equation}
where $dl_{z,k}$ is the length element on $C_k(z)$. Notice that $T_k(z)$ is the period of the motion along the level set $C_k(z)$.

As is seen above, if $X(0)=x \in\,C_k(z)$, then $X(t) \in\,C_k(z)$, for every $t\geq 0$. As  known, for every $(z,k) \in\,\Gamma$ the probability measure
\begin{equation}
\label{brr}
d\mu_{z,k}:=\frac 1{T_k(z)}\,\frac {\la^2(x)}{\beta_0(x)|\nabla \la(x)|}\,dl_{z,k}\end{equation}
is invariant for system \eqref{ham} on the level set $C_k(z)$.

\subsection{The  limit of $\Pi(q_\e)$}
Due to \eqref{sjy33}, for every $\e>0$ we have
\begin{equation}
\label{sjy40}
\La_\e^{-1}(q)=\frac 1{\la(q)}A+\e\,H^\e(q),
\end{equation}
where
\[H^\e(q):=\frac{1}{\la^2(q)+\e^2}\le(I-\frac{\e}{\la(q)}A\r).\]
Notice that
\begin{equation}
\label{sjy38}
\sup_{\e>0}|H^\e(q)|<\infty,\ \ \ \ \ q \in\,\mathbb{R}^2.
\end{equation}

\begin{Lemma}
Let ${R}^\e:\mathbb{R}^2\to \mathbb{R}^{2\times 2}$ be the mapping introduced in Lemma \ref{l2.2}. Then
\begin{equation}
\label{sjy39}
\sup_{\e>0}\frac 1\e\,|\hat{R}^\e(q)|<\infty,\ \ \ \ \ q \in\,\mathbb{R}^2.
\end{equation}
\end{Lemma}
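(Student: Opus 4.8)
The plan is to reduce the matrix bound \eqref{sjy39} to four scalar estimates and, for each, to expose the exact algebraic cancellation that turns an apparently $O(1)$ (or even $O(1/\e)$) quantity into one of order $O(\e)$. Here $\hat R^\e$ denotes the matrix $R^\e$ of Lemma \ref{l2.2}, so it suffices to show that, for each fixed $q$, $\sup_{\e>0}\e^{-1}|R^\e_{ij}(q)|<\infty$ for $i,j\in\{1,2\}$. Fix $q\in\mathbb{R}^2$ once and for all. By Hypothesis \ref{H1} we have $\la:=\la(q)\geq\la_0>0$, so that $\beta_0,\beta_1,\beta_2$ (see \eqref{sjy50}) and $\Gamma^\e_1,\Gamma^\e_2$ are, for this fixed $q$, explicit functions of $\e$ alone, with the $\Gamma^\e_i$ bounded uniformly in $\e\in(0,\infty)$. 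Each entry $R^\e_{ij}(q)$, read off from \eqref{sjy24}, \eqref{sjy25}, \eqref{sjy27} and \eqref{sjy28}, is then a rational function of $\e$ whose only denominators are positive powers of $\la^2+\e^2$.

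I would classify the summands of each $R^\e_{ij}$ into three types. The first type consists of terms carrying an explicit prefactor $\e$, such as $-\tfrac{2\e\la}{(\la^2+\e^2)^2}\Gamma^\e_i(q)$ and $\pm\,\e\,\beta_i(q)\tfrac{\la^2-\e^2}{(\la^2+\e^2)^3}$; since the remaining rational factors and the $\Gamma^\e_i$ are bounded, these are manifestly $\e$ times a bounded function of $\e$. The second type consists of the ``difference brackets'' without an explicit $\e$, which I would rewrite via the exact identities
\[
\frac{1}{\la^3}-\frac{\la}{(\la^2+\e^2)^2}=\frac{\e^2(2\la^2+\e^2)}{\la^3(\la^2+\e^2)^2},\qquad
\frac{1}{\la^3}-\frac{\la(\la^2-\e^2)}{(\la^2+\e^2)^3}=\frac{\e^2(4\la^4+3\la^2\e^2+\e^4)}{\la^3(\la^2+\e^2)^3},
\]
which exhibit them as $O(\e^2)$, hence $o(\e)$.

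The third type, which is the crux and essentially the only obstacle, is the singular-looking term $\mp\,\tfrac{\beta_0(q)}{\e}\bigl[\tfrac{1}{\la^2}-\tfrac{\la^2-\e^2}{(\la^2+\e^2)^2}\bigr]$ occurring in $R^\e_{12}$ and $R^\e_{21}$ (see \eqref{sjy25} and \eqref{sjy27}). Here a naive term-by-term bound fails, and one must invoke the exact identity
\[
\frac{1}{\la^2}-\frac{\la^2-\e^2}{(\la^2+\e^2)^2}=\frac{\e^2(3\la^2+\e^2)}{\la^2(\la^2+\e^2)^2},
\]
which cancels the offending $1/\e$ against the $\e^2$ in the numerator, so that the term equals $\mp\,\tfrac{\beta_0(q)\,\e(3\la^2+\e^2)}{\la^2(\la^2+\e^2)^2}=O(\e)$. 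In other words, the $1/\e$ singularity is only apparent: it is annihilated by the structure of the bracket. I expect this cancellation to be the entire mathematical content of the lemma; everything else is bookkeeping.

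Finally I would assemble the estimate. Combining the three types, each entry can be written as $R^\e_{ij}(q)=\e\,g_{ij}(\e)$, where $g_{ij}$ is a rational function of $\e$ whose denominator is a power of $\la^2+\e^2$ (never zero, since $\la\geq\la_0>0$) and whose numerator has degree no larger than that of the denominator. Such a function of $\e$ extends continuously to $\e=0$ and has a finite limit as $\e\to\infty$, hence is bounded on $[0,\infty)$. Therefore $\sup_{\e>0}\e^{-1}|R^\e_{ij}(q)|=\sup_{\e>0}|g_{ij}(\e)|<\infty$ for every $i,j\in\{1,2\}$, and taking the maximum over the four entries yields \eqref{sjy39}.
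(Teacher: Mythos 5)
Your proof is correct and follows essentially the same route as the paper: the three exact algebraic identities you display (rewriting each bracket as $\e^2$ times a bounded rational function of $\e$) are precisely the ones the paper uses, with the key cancellation being the $\frac{\beta_0}{\e}$ term against the $\e^2$ in the third identity. The only difference is that you spell out the bookkeeping for the remaining terms, which the paper leaves implicit.
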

\begin{proof}
We have
\[\frac{1}{\la^3(q)}-\frac{\la(q)}{(\la^2(q)+\e^2)^2}=\e^2\le[\frac{2\la^2(q)+\e^2}{\la^3(q)(\la^2(q)+\e^2)^2}\r],\]
and
\[\frac{1}{\la^3(q)}-\frac{\la(q)\le(\la^2(q)-\e^2\r)}{(\la^2(q)+\e^2)^3}=\e^2\le[\frac{4\la^4(q)+3\e^2\la^2(q)+\e^4}{\la^3(q)\le(\la^2(q)+\e^2\r)^3}\r]\]
and
\[\frac{1}{\la^2(q)}-\frac{\la^2(q)-\e^2}{(\la^2(q)+\e^2)^2}=\e^2\le[\frac{3\la^2(q)+\e^2}{\la^2(q)\le(\la^2(q)+\e^2\r)^2}\r].\]
Therefore, in view of the definition of $R^\e(q)$  in \eqref{sjy24}, \eqref{sjy25}, \eqref{sjy27} and \eqref{sjy28}, this concludes the proof.

\end{proof}

According to \eqref{sjy30}, \eqref{sjy40}, \eqref{sjy38} and \eqref{sjy39}, equation \eqref{sjy34} can be rewritten as
\begin{equation}
\label{sjy41}
\begin{array}{l}
\ds{dq_\e(q)=
\frac 1\e\frac{\beta_0(q_\e(t))}{\la^2(q_\e(t))}{\nabla}^\perp\la(q_\e(t))\,dt+B(q_\e(t))\,dt+\Sigma(q_\e(t))\,dw(t),}\\
\vs
\ds{+\e\,\le[B_\e(q_\e(t))\,dt+\Sigma_\e(q_\e(t))\,dw(t)\r],\ \ \ \ \ \ q_\e(0)=q,}
\end{array}\end{equation}
where
\[B(q)=\frac{1}{\la(q)}Ab(q)-M(q)\nabla \la(q),\ \ \ \Sigma(q)=\frac 1{\la(q)}A\si(q),\]
and
\[B_\e(q)=H^\e(q)b(q)+\frac 1\e\,{R}^\e(q)\nabla \la(q),\ \ \ \ \Sigma_\e(q)=H^\e(q)\si(q).\]This means that, as $\e\downarrow 0$, some of the coefficients are of order $O(\e^{-1})$, part of order $O(1)$ and part of order $O(\e)$.

With the notations introduced in the previous section, in what follows, we want to investigate the limiting behavior of the $\Gamma$-valued process $\Pi(q_\e(\cdot))=(\la(q_\e(\cdot)),k(q_\e(\cdot)))$, as $\e\downarrow 0$. 

Applying It\^o's formula to $\la(q_\e(t))$, we get
\[d\la(q_\e(t))=\mathcal{G}\la(q_\e(t))\,dt+\mathcal{A}\la(q_\e(t))\,dw(t)+\e\,\mathcal{G}_\e\la(q_\e(t))\,dt+\e\,\mathcal{A}_\e\la(q_\e(t))\,dw(t),\]
where for every $f \in\,C^2(\mathbb{R}^2)$ and $q \in\,\mathbb{R}^2$
\[\mathcal{G}f(q)=\frac 12\mbox{Tr}\le[\Sigma \Sigma^\star(q)D^2f(q)\r]+\langle Df(q),B(q)\rangle,\]
\[\mathcal{A}f(q)=\Sigma(q)^\star Df(q),\]
\[\mathcal{G}_\e f(q)=\frac 12 \mbox{Tr}\le[\le(\e\,\Sigma_\e \Sigma_\e^\star(q)+\Sigma\Sigma_\e^\star(q)+\Sigma_\e\Sigma^\star(q)\r)D^2f(q)\r]+\langle Df(q),B_\e(q)\rangle,\]
and
\[\mathcal{A}_\e f(q)=\Sigma_\e^\star(q) Df(q).\]

\medskip

We recall that the graph $\Gamma$ is made of $n$ edges $I_1,\ldots,I_n$ and $m$ vertices $O_1,\ldots,O_m$. For every $j=1,\ldots,n$ and for every $f$ that is twice differentiable in the interior of the edge $I_j$, we denote 
\begin{equation}
\label{sjy46}
\mathcal{L}_j f(z)=\frac 12 \a_j(z)f^{\prime \prime}(x)+\gamma_j(z)f^\prime(z),\end{equation}
where
\[\a_j(z)=\oint_{C_j(z)} |\mathcal{A}\la(x)|^2\,d\mu_{z,j}(x)=\oint_{C_j(z)} |\Sigma^\star(x)\nabla \la(x)|^2\,d\mu_{z,j}(x),\]
\[\gamma_j(z)=\oint_{C_j(z)} \mathcal{G}\la(x)\,d\mu_{z,j}(x),\]
and
$d\mu_{z, j}$ is the probability measure introduced in \eqref{brr}.

\begin{Definition}
\label{def1}
For each interior vertex $O_k$ and any edge $I_j$ adjacent to $O_k$ (notation $I_j\sim O_k$), let $\rho_{kj}$ be the positive constant defined by
\[\rho_{kj}=\oint_{C_{kj}}\frac{\la^2(x)}{\beta_0(x)|\nabla \la(x)|}|\Sigma^\star(x)\nabla \la(x)|^2\,dl(x).\]

We denote by $D(L)\subset C(\Gamma)$ the set consisting all continuously differentiable functions $f$ defined on the graph $\Gamma$ such that $\mathcal{L}_jf$ is well defined in the interior of the edge $I_j$ and for every $I_j\sim O_k$ there exists finite
\[\lim_{x\to O_k}\mathcal{L}_j f(x)\]
and the limit is independent of the edge $I_j$. Moreover, for each interior vertex $O_k$ 
\[\sum_{j\,:\,I_j\sim O_k} \pm \rho_{kj}f_j^\prime(\la(O_k))= 0,\]
where $f_j^\prime$ denotes the derivative of $f$ with respect to the local coordinate $z$, along the edge $I_j$ and the sign $\pm$ are taken if on the edge $I_j$ it holds $z>\la(O_k)$ or $z<\la(O_k)$.

Next, for every $f \in\,D(L) $, we  define
\[L f(x)=\begin{cases}
\mathcal{L}_j f(x),  &  \text{if}\ x\ \text{is an interior point of}\ I_j,\\
\lim_{x\to O_k}\mathcal{L}_j f(x),   &  \text{if}\ x\ \text{is the vertex}\ O_k\ \text{and}\ I_j\sim O_k.
\end{cases}\]

\end{Definition}

As proven in \cite[Theorem 8.2.1]{fw}, in case $\Sigma(q)=I$ the operator $L$ defined on the domain $D(L)$, as described in Definition \ref{def1}, is the generator of a strong Markov process $Y_t$ on $\Gamma$ with continuous trajectories. Here the same result holds, because of the non-degeneracy condition \eqref{sjy51} satisfied by the diffusion coefficient $\Sigma(q)$.

In fact, as shown in the next theorem, the Markov process $Y$ is the weak limit in $C([0,T];\Gamma)$ of the slow motion $\Pi(q_\e(\cdot))$ on $\Gamma$.

\begin{Theorem}
Under Hypotheses \ref{H1}, \ref{H2} and \ref{H3}, for every fixed $T>0$ the $\Gamma$-valued process $\Pi(q_\e(\cdot))$ converges weakly in $C([0,T];\Gamma)$ to the Markov process $Y$ generated by the operator $(L,D(L))$, introduced in Definition \ref{def1}.
\end{Theorem}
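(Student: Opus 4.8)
The plan is to prove weak convergence of $\Pi(q_\e(\cdot))$ to $Y$ via the martingale problem for $(L,D(L))$, following the Freidlin--Wentzell averaging scheme (see \cite[Chapter 8]{fw} and \cite{Y}) adapted to the non-degenerate diffusion $\Sigma$. The argument splits naturally into three parts: tightness of the family $\{\Pi(q_\e(\cdot))\}_{\e>0}$ in $C([0,T];\Gamma)$, identification of any weak limit as a solution of the martingale problem for $(L,D(L))$, and uniqueness of that martingale problem, which holds by \cite[Theorem 8.2.1]{fw} since the operator generates a strong Markov process on $\Gamma$ thanks to the nondegeneracy \eqref{sjy51}.

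For tightness, first I would establish moment bounds on $q_\e$ uniform in $\e$ on $[0,T]$, using Hypothesis \ref{H3}(4) (the quadratic growth $\la(x)\geq a|x|^2$ together with $|\nabla\la(x)|\geq a|x|$ and $\Delta\la(x)\geq a$) to control the singular $O(\e^{-1})$ drift: applying It\^o's formula to $\la(q_\e(t))$ and exploiting that $\nabla^\perp\la\cdot\nabla\la=0$ kills the leading $\e^{-1}$ term, so that $\la(q_\e(t))$ solves an SDE whose coefficients $\mathcal{G}\la$, $\mathcal{A}\la$, together with the $\e$-order corrections $\mathcal{G}_\e\la,\mathcal{A}_\e\la$, are of order $O(1)$. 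The growth conditions guarantee the process does not escape to infinity, giving tightness of $\la(q_\e(\cdot))$ on the real line and, combined with the discrete coordinate, tightness of the $\Gamma$-valued family $\Pi(q_\e(\cdot))$; here one uses the distance $d$ on $\Gamma$ introduced above and an Aldous-type criterion.

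The heart of the matter is identifying the limit. The strategy is: for $f\in D(L)$, apply It\^o to $f(\Pi(q_\e(t)))=f(\la(q_\e(t)),k(q_\e(t)))$ on excursions away from neighborhoods of the interior vertices, where $f$ is smooth and the fast rotation along $C_k(z)$ governed by \eqref{ham} averages the $O(1)$ coefficients against the invariant measure $d\mu_{z,k}$ of \eqref{brr}. The averaging produces precisely the operator $\mathcal{L}_j$ of \eqref{sjy46} with coefficients $\a_j,\gamma_j$. The two delicate points, which I expect to be the main obstacle, are (i) controlling the time the process spends near the interior (saddle) vertices and verifying that the gluing conditions $\sum_{j:I_j\sim O_k}\pm\rho_{kj}f_j'(\la(O_k))=0$ emerge in the limit, and (ii) justifying the averaging itself, namely that the fast dynamics equilibrates on the $O(\e)$ timescale relative to the $O(1)$ transversal drift. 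For (i) I would introduce stopping times at small level-set annuli around each $O_k$, estimate the exit distribution and sojourn near the saddle using the logarithmic-in-$\e$ time scale characteristic of Hamiltonian flows near hyperbolic points (as in \cite{fw}), and show the flux balance across the vertex reproduces the $\rho_{kj}$-weighted condition. For (ii) I would use the ergodic theorem for the flow \eqref{ham} together with the uniform bounds \eqref{sjy38}, \eqref{sjy39} to show the $\e$-order terms $B_\e,\Sigma_\e$ contribute nothing in the limit.

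Finally, combining tightness with the fact that every subsequential limit solves the well-posed martingale problem for $(L,D(L))$ forces the whole family to converge weakly to $Y$. The nondegeneracy \eqref{sjy51}, ensuring $\a_j(z)>0$ away from extrema, is what makes both the averaged operator genuinely elliptic on each edge and the martingale problem well-posed, so it is used crucially in the uniqueness step rather than merely in the tightness estimates.
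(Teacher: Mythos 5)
Your strategy is sound and is the same underlying route as the paper's, but the two proofs sit at very different levels: the paper's entire argument consists of rewriting equation \eqref{sjy34} in the form \eqref{sjy41} --- a fast Hamiltonian-type rotation of order $O(\e^{-1})$ plus $O(1)$ drift and diffusion $B,\Sigma$ plus vanishing corrections $B_\e,\Sigma_\e$ controlled by \eqref{sjy38} and \eqref{sjy39} --- and then invoking the generalized Freidlin--Wentzell averaging theorem of \cite{Y}, which extends \cite[Theorem 8.2.2]{fw} to exactly this setting (non-constant, non-identity $\Sigma$, an $O(1)$ drift $B$, and $O(\e)$ perturbations). What you propose is, in effect, a sketch of the proof of that cited theorem itself: tightness, identification of subsequential limits via the martingale problem for $(L,D(L))$, averaging against the invariant measures \eqref{brr} to produce the edge operators \eqref{sjy46}, and the flux-balance analysis near the interior vertices yielding the gluing conditions of Definition \ref{def1}. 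That is the correct architecture, and your remarks on where \eqref{sjy51} and Hypothesis \ref{H3}(4) enter are consistent with the paper. Be aware, however, that the two points you yourself flag as ``the main obstacle'' --- the sojourn/exit analysis near the saddle vertices that produces the $\rho_{kj}$-weighted Kirchhoff condition, and the quantitative equilibration estimate justifying the averaging on each edge --- are precisely the nontrivial content that the paper delegates wholesale to \cite{Y}; as written, your proposal states these as intentions rather than proofs, so it is an outline of the reference rather than a self-contained argument. If you intend to follow the paper, the efficient path is to verify carefully that \eqref{sjy41} satisfies the hypotheses of \cite{Y} (this is where Lemma \ref{l2.2}, \eqref{sjy38}, \eqref{sjy39} and Hypothesis \ref{H3} are actually used) and cite that result, rather than reproving it.
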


\begin{proof}
If in equation \eqref{sjy41} we have $B(q)=B_\e(q)=\Sigma_\e(q)=0$ and $\Sigma(q)=I$, the result above is what is proven in \cite[Theorem 8.2.2]{fw}. In the present situation we are dealing with the more general situation in which we have a coefficient  $B(q)$ of order $O(1)$ and coefficients $B_\e(q)$  of order $O(\e)$. Moreover we allow a non-constant diffusion coefficient $\Sigma(q)+\Sigma_\e(q)$, where $\Sigma(q)$ is of order $O(1)$ and $\Sigma_\e(q)$ is of order $O(\e)$. As shown in \cite{Y}, under these more general assumptions, an averaging principle of the same type of the one described in \cite[Theorem 8.2.2]{fw} is still valid. This of course has required to introduce a suitable generalization of the operator $(L, D(L))$, that takes into account the coefficients $B$ and $\Sigma$, and to extend the limiting result in presence of the vanishing terms $B_\e$ and $\Sigma_\e$.

\end{proof}

\end{document}